\documentclass[11pt, a4paper]{article}

\usepackage[margin=3cm]{geometry}
\usepackage{amsfonts}
\usepackage{amsthm}
\usepackage{amsmath}
\usepackage{amssymb}
\usepackage[T1]{fontenc}
\usepackage{cite}
\usepackage{mathrsfs}
\usepackage{amscd}
\usepackage[utf8]{inputenc}
\usepackage{t1enc}
\usepackage{lmodern}
\usepackage{dsfont}
\usepackage{xcolor}
\usepackage{float}
\usepackage{cancel}
\usepackage{hyperref}
\usepackage[mathscr]{eucal}
\usepackage{indentfirst}
\renewcommand{\epsilon}{\varepsilon}
\usepackage{bbm}
\usepackage{graphicx}

\usepackage{tcolorbox}

\theoremstyle{plain}
\newtheorem{theorem}{Theorem}[section]

\theoremstyle{definition}

\newtheorem{remark}[theorem]{Remark}
\newtheorem{example}[theorem]{Example}

\numberwithin{equation}{section}

\title{Uniqueness of the infinite cluster for monotone percolation models without insertion tolerance}
\date{\today} 
\author{Christoforos Panagiotis\footnote{University of Bath, \nolinkurl{cp2324@bath.ac.uk}} \and Alexandre Stauffer\footnote{King's College London, \nolinkurl{a.stauffer@kcl.ac.uk}}}
\begin{document}

\maketitle

\begin{abstract}
We consider a broad class of dependent site-percolation models on $\mathbb{Z}^d$ obtained by applying a monotone automaton to a random initial particle configuration drawn from a stochastically increasing family of measures. We prove that whenever the underlying particle configuration is sampled from an insertion-tolerant measure and the avalanches generated by the dynamics produce connected sets, the supercritical phase almost surely contains a unique infinite cluster. Our result applies to several well-studied interacting particle systems, including the Abelian sandpile, activated random walk, and bootstrap percolation. In these models, the induced percolation measure typically does not satisfy standard conditions such as finite energy or insertion tolerance, so the classical Burton-Keane argument does not apply. As an application, we answer a question of Fey, Meester, and Redig \cite{FMR09} concerning percolation of the toppled vertices in the Abelian sandpile with i.i.d.\ initial configuration.
\end{abstract}

\section{Introduction}

In the current article, we consider a general class of site-percolation models $\omega^p\in \{0,1\}^{\mathbb{Z}^d}$ that can be obtained via a monotone automaton acting on an initial random configuration of particles sampled from a one-parameter family of stochastically increasing measures $\mathbb{P}_p$; see Section~\ref{sec:Def} for the precise definition of the model. This framework encompasses several fundamental interacting particle systems, including the Abelian sandpile, the activated random walk, and bootstrap percolation. These models are examples of systems exhibiting self-organized criticality and absorbing-state phase transitions, and have been the subject of extensive study; see \cite{FMR09,Jarai, LP09,RS12, Bollobas,Rolla20}. The interested reader can consult \cite{Grimmett99, LyonsPeres} for a comprehensive introduction to percolation theory.

As it is standard in percolation theory, we view $\omega^p$ as a random subgraph of $\mathbb{Z}^d$, and we are interested in its connectivity properties, in particular, whether it contains an infinite cluster. We write $\mathcal{N}$ for the number of infinite clusters of $\omega^p$ and define 
\[
p_c:=\inf\{p\in [0,1]: \, \mathbb{P}_p(\mathcal{N}\geq 1)>0\}.
\]

One of the most fundamental questions about the behaviour of the model in the supercritical regime, where an infinite cluster exists, concerns the number of infinite clusters. For Bernoulli percolation, in which each vertex is present independently, it was established by Aizenman, Kesten, and Newman \cite{AKN87} that there is a unique infinite cluster. Shortly thereafter, Burton and Keane \cite{BK89} introduced a more robust argument that also applies to dependent percolation models satisfying the \emph{finite-energy property}, which, roughly speaking, asserts that vertices can be added or removed at finite cost. More generally, their approach extends to \emph{insertion-tolerant} percolation measures, for which vertices can only be added at finite cost---see for instance \cite{ADCS15, GPPS25} for a proof of uniqueness under the assumption of insertion tolerance. Related works include \cite{GGR88, GKR88}, and \cite{HJ06} provides a survey on the uniqueness of the infinite cluster.

For percolation models without insertion tolerance, uniqueness of the infinite cluster is much more challenging to prove. Models such as the vacant set of random interlacements require some effort to handle \cite{Tex09}, while for other percolation models---such as Bernoulli line percolation \cite{HS19}---uniqueness of the infinite cluster remains, to the best of our knowledge, open.

Our motivation for studying the question of uniqueness of the infinite cluster in the present context arises from the work of Fey, Meester, and Redig \cite{FMR09}, who studied the percolative properties of the Abelian sandpile with a random initial configuration and posed several questions concerning its behaviour, including whether the infinite cluster of toppled vertices is unique when the initial configuration is sampled from a product measure. The models we consider in this article do not, in general, satisfy the insertion tolerance property, since adding a new particle to the system can potentially create an infinite avalanche. As a consequence, the standard uniqueness arguments do not apply. Nevertheless, we show that if the underlying measure $\mathbb{P}_p$ is i.i.d.\ or, more generally, is insertion-tolerant, and if each avalanche gives rise to a connected set, then the infinite cluster is unique. The latter property is satisfied by broad families of dynamics. This result, in particular, answers the question of Fey, Meester, and Redig for the Abelian sandpile \cite{FMR09}.

Below we state our main result. See Section~\ref{sec:Def} for the definitions of properties \textbf{R1}-\textbf{R4} and \textbf{D1}-\textbf{D4}.

\begin{theorem}\label{thm:main result}
Let $\mathbb{P}_p$ be a measure satisfying \textbf{R1}-\textbf{R4} and \textbf{D1}-\textbf{D4}. For every $p>p_c$, we have 
$$
\mathbb{P}_p(\mathcal{N}=1)=1.
$$
\end{theorem}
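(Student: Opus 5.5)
We follow the Burton--Keane scheme, but we perform every local modification on the \emph{initial particle configuration} $\eta\sim\mathbb{P}_p$ rather than on $\omega^p$. Insertion tolerance holds for $\eta$ (\textbf{R1}--\textbf{R4}), and monotonicity of the automaton together with connectedness of avalanches (\textbf{D1}--\textbf{D4}) lets us control how such a modification affects $\omega^p$. Translation invariance and ergodicity of $\mathbb{P}_p$, which we take to be among \textbf{R1}--\textbf{R4}, pass to $\omega^p$ because the automaton commutes with shifts (\textbf{D1}). Hence $\mathcal{N}$ is a.s.\ equal to a constant $k\in\{0,1,2,\dots\}\cup\{\infty\}$. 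Since $p>p_c$, monotonicity in $p$ of the law of $\omega^p$ gives $k\ge 1$. It therefore suffices to rule out $2\le k<\infty$ and $k=\infty$.

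\emph{Step 1: the case $2\le k<\infty$.} Pick a box $\Lambda_n$ that intersects two distinct infinite clusters with positive probability. Add particles to $\eta$ inside $\Lambda_n$, for instance enough to make every site of $\Lambda_n$ ``active''. This event has positive conditional probability by insertion tolerance. Monotonicity (\textbf{D2}) implies $\omega^p$ can only grow, so every previously infinite cluster survives, possibly inside a larger cluster. The dynamics triggered by the added particles form an avalanche. By the connectedness assumption the avalanche is a connected set containing $\Lambda_n$, whose sites are all occupied. It therefore joins all infinite clusters meeting $\Lambda_n$ into one. The new avalanche may be infinite, but that does not matter here: it merges clusters and never creates new disjoint infinite ones. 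So $\mathcal{N}$ strictly decreases with positive probability, which contradicts a.s.\ constancy. The measure-theoretic transfer uses the standard form of insertion tolerance: an event of positive probability stays of positive probability after a local addition.

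\emph{Step 2: the case $k=\infty$, which is the main obstacle.} Burton--Keane requires creating a \emph{trifurcation}: a site whose removal splits its cluster into at least three infinite pieces. Without deletion tolerance for $\omega^p$ we cannot carve out the required tree shape. Our plan is to work directly with encounter points defined in terms of the dynamics. Choose $n$ such that $\Lambda_n$ meets at least three infinite clusters with positive probability. Insert particles at a single site $x\in\Lambda_n$ chosen so that the resulting avalanche $A$ is a connected set touching three of these clusters, and trim the construction using monotonicity.

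Rather than demanding a trifurcation in the strict graph sense, we would count ``avalanche-merging events''. For each box we record the configurations in which a local insertion merges at least three distinct infinite clusters. We then run the combinatorial counting argument on a coupled family of modified configurations, ordering the insertions inside a large box $\Lambda_N$. We expect the main difficulty to be that different insertions interact: an avalanche launched from one box can reach others, possibly all the way to infinity. We would first try to restrict to configurations in which the avalanches generated from the boxes in question are finite and stay inside slightly larger boxes. This should have positive probability if we condition on the original infinite clusters being ``stable'' far away. The counting bound, with the number of merging points at most the number of boundary sites of $\Lambda_N$, then contradicts the linear-in-volume lower bound coming from the ergodic theorem. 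Thus $k=1$.
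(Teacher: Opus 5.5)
Your Step 1 (ruling out $2\le k<\infty$) is sound. Iterating \textbf{D4} over the sites of $\Lambda_n$ shows that raising $\eta$ on $\Lambda_n$ merges the infinite clusters meeting $\Lambda_n$ and creates no new disjoint infinite clusters; you need only \textbf{D4} here, not connectedness of the avalanche itself. \textbf{R3} gives insertion tolerance of $\mathbb{P}_p$ itself by the tower property. The paper runs the same argument, but for an auxiliary configuration.

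The gap is Step 2, which is the whole difficulty of the theorem, and what you give there is a plan rather than an argument. Burton--Keane needs $\mathbb{P}(o \text{ is a (coarse) trifurcation of } \omega^p)>0$. That requires an insertion that joins three infinite clusters through a box \emph{while keeping them disjoint outside it}. Raising $\eta$ in a box triggers an avalanche that may run to infinity and connect the three clusters far away, and nothing in \textbf{D1}--\textbf{D4} prevents this:
\begin{itemize}
\item Monotonicity only ever adds occupied sites, so there is nothing to ``trim''.
\item \textbf{D4} only says no new separate infinite clusters appear; it says nothing about the avalanche staying bounded.
\item Your restriction to configurations where avalanches are finite and confined to a slightly larger box has no justification in the hypotheses. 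This is exactly the failure of insertion tolerance for $\omega^p$.
\item Counting ``avalanche-merging events'' over a family of differently modified configurations does not feed into Burton--Keane. The bound $\#\{\text{trifurcations in }\Lambda_N\}\le|\partial\Lambda_N|$ holds only for trifurcations coexisting in one configuration, and the volume lower bound needs trifurcations of $\omega^p$ itself with positive probability.
\end{itemize}

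The missing idea is never to ask $\omega^p$ to preserve separation.
\begin{itemize}
\item Using the coupling $X\le Y\le Z$ with $p_c<p_1<p_2<p_3=p$, the paper defines $\omega^{p_1,p_2}(x)=\max\{T(X)_x,\mathbbm{1}_{\{Y_x\ge t\}}\}$. Since $T(X)$ does not depend on $Y$, raising $Y_x$ changes $\omega^{p_1,p_2}$ only at $x$. By \textbf{R3}, this is a genuinely insertion-tolerant, ergodic site percolation.
\item It percolates because it contains $\omega^{p_1}$. Coarse-trifurcation Burton--Keane then gives a unique infinite cluster $\mathcal{C}^{p_1,p_2}_\infty$, which lies in $\omega^{p}$ by \textbf{D2} and \textbf{D3}.
\item It remains to show every $\omega^p$ infinite cluster contains $\mathcal{C}^{p_1,p_2}_\infty$. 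Here insertions in $Z$ are used only to \emph{create} connections, and an avalanche is then harmless.
\item A mass-transport argument shows that if an $\omega^p$ infinite cluster is disjoint from $\mathcal{C}^{p_1,p_2}_\infty$, the distance $D$ between them is attained infinitely often.
\item A multi-valued map argument then opens length-$D$ geodesics by raising only $Z$, so $X$, $Y$ and hence $\omega^{p_1,p_2}$ are unchanged. \textbf{D4} lets one recover the pre-image up to $|B_{2D}(o)|^2$ choices. This bounds the probability of a disjoint infinite cluster at distance $D$ attained at least $K$ times in $B_N(o)$ by $|B_{2D}(o)|^2/(\varepsilon^D K)$ for every $K$. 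That contradicts the positive lower bound $c/2$ for this event obtained from Step~2.
\end{itemize}
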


Our proof consists of three steps. Since the original model lacks insertion tolerance, we first introduce a comparison mechanism across different parameter values. More precisely, we consider parameters $p_c<p_1<p_2<p_3$ and define an auxiliary configuration $\omega^{p_1,p_2}$ that is insertion-tolerant and interpolates between $\omega^{p_1}$ and $\omega^{p_2}$. By the Burton–Keane argument, $\omega^{p_1,p_2}$ has a unique infinite cluster. Next, we relate this auxiliary configuration back to the original model. Using the mass-transport principle, we show that if there exists an $\omega^{p_3}$ infinite cluster that does not contain the $\omega^{p_1,p_2}$ infinite cluster, then the distance between them is attained infinitely many times. Finally, we exploit this property via a multi-valued map argument to merge these two infinite clusters into one, thereby contradicting the existence of multiple infinite clusters.

\paragraph{Acknowledgements:} We would like to thank Ahmed Bou-Rabee for useful discussions. CP was supported by an EPSRC New Investigator Award (UKRI1019).

\section{Definitions}\label{sec:Def}

Consider the hypercubic lattice $\mathbb{Z}^d$. Throughout the paper, we write $o$ to denote an arbitrary origin and $B_n(o)$ to denote the ball of radius $n$ centred at $o$. 

We begin by briefly introducing the Abelian sandpile model with a random initial configuration of particles, which serves as our motivating example for the properties that our dynamics are required to satisfy. Consider an initial configuration of particles $\xi \in \mathbb{N}^{\mathbb{Z}^d}$. A vertex is called \textit{stable} if $\xi_x < t$, where $t=2d$ is the degree of every vertex in $\mathbb{Z}^d$, and \textit{unstable} otherwise. If a vertex $x$ is unstable, then it \textit{topples}, meaning that it sends one particle to each of its neighbours. This procedure continues for as long as unstable vertices exist. Depending on the initial configuration $\xi$, it is possible that the process \textit{stabilizes}, in the sense that each vertex topples a finite number of times, or that it does not stabilize. If the process stabilizes, then the Abelian sandpile satisfies the so-called \textit{Abelian property}, which, roughly speaking, states that the final configuration does not depend on the order in which topplings are performed.

In \cite{FMR09}, Fey, Meester, and Redig considered the Abelian sandpile with a random initial configuration sampled from a one-parameter family of probability measures $\mathbb{P}_p$. One can then define two critical points:
$p^{\textup{topple}}_c := \inf\{p \in [0,1] : \mathbb{P}_p(\mathcal{N} \geq 1)\}$,
where $\mathcal{N}$ denotes the number of infinite clusters of toppled vertices, and
$p^{\textup{stab}}_c := \sup\{p \in [0,1] : \mathbb{P}_p(\text{stabilization}) = 1\}$.
It is expected that $p^{\textup{topple}}_c < p^{\textup{stab}}_c$ in great generality. In \cite{FMR09}, it was asked whether almost surely $\mathcal{N} = 1$ for $p>p^{\textup{topple}}_c$. The classical Burton--Keane argument \cite{BK89} does not apply in this context, due to the fact that the final configuration is not necessarily insertion tolerant. To see this, note that one can construct deterministic stable configurations such that adding a single particle creates an infinite avalanche of toppled vertices. Despite this, our main result establishes the uniqueness of the infinite cluster of toppled vertices for every $p>p^{\textup{topple}}_c$.

We now move on to the general setting to which our results apply. We begin by introducing the underlying randomness. To this end, we first set up some notation. Given $\xi \in [0,\infty)^{\mathbb{Z}^d}$ and $x \in \mathbb{Z}^d$, we define $\tau_x \xi$ to be the configuration given by $(\tau_x \xi)_y = \xi_{y-x}$ for every $y \in \mathbb{Z}^d$. We extend this notation to events $A$ that are measurable with respect to the product $\sigma$-algebra by defining $\tau_x A = \{\tau_x \xi : \xi \in A\}$. We say that an event $A$ is \emph{translation-invariant} if $\tau_x A = A$ for every $x \in \mathbb{Z}^d$. We say that a measure $\mu$ on $[0,\infty)^{\mathbb{Z}^d}$ is \emph{ergodic} if, for every translation-invariant event $A$, we have $\mu(A) \in \{0,1\}$. All these definitions extend in a natural way to $[0,\infty)^{\mathbb{Z}^d} \times [0,\infty)^{\mathbb{Z}^d}$ and $[0,\infty)^{\mathbb{Z}^d} \times [0,\infty)^{\mathbb{Z}^d} \times [0,\infty)^{\mathbb{Z}^d}$.

\textbf{Definition of underlying randomness.} Let $t > 0$ be a constant fixed throughout the paper. This constant $t$ should be thought of as the degree of the graph in the context of the Abelian sandpile. Below we denote the $\sigma$-algebra generated by some random variables $W_1,W_2,\ldots$ as $\sigma(W_1,W_2,\ldots)$. We consider a family of measures $(\mathbb{P}_p)_{p\in [0,1]}$ on $[0,\infty)^{\mathbb{Z}^d}$ such that for every $p_1<p_2<p_3$ there exists a coupling $(\mathbb{P}_{p_1,p_2,p_3}, X, Y, Z)$ with $X\sim \mathbb{P}_{p_1}$, $Y\sim \mathbb{P}_{p_2}$ and $Z\sim \mathbb{P}_{p_3}$ that satisfies the following properties:
\begin{enumerate}
    \item[$\textbf{R1}$] \textbf{(Translation invariance)} for every $x\in\mathbb{Z}^d$, $(\tau_x X,\tau_x Y, \tau_x Z)$ has law $\mathbb{P}_{p_1,p_2,p_3}$,
    \item[$\textbf{R2}$] \textbf{(Monotonicity)} $\mathbb{P}_{p_1,p_2,p_3}$-almost surely, for every $x\in\mathbb{Z}^d$ we have $X_x\le Y_x \le Z_x$,
    \item[$\textbf{R3}$] \textbf{(Insertion tolerance)} there exists a constant $\varepsilon>0$ such that $\mathbb{P}_{p_1,p_2,p_3}$-almost surely, for every $x\in\mathbb{Z}^d$ we have $\mathbb{P}_{p_1,p_2,p_3}(Y_x\geq t \mid \sigma(X,(Y_y)_{y\in \mathbb{Z}^d\setminus\{x\}}))\geq \varepsilon$ and $\mathbb{P}_{p_1,p_2,p_3}(Z_x\geq t \mid \sigma(X,Y,(Z_y)_{y\in \mathbb{Z}^d\setminus\{x\}}))\geq \varepsilon$,
    \item[$\textbf{R4}$] \textbf{(Ergodicity)} $\mathbb{P}_{p_1,p_2,p_3}$ is ergodic.
\end{enumerate}

\begin{example}
The simplest example satisfying $\textbf{R1}-\textbf{R4}$ is obtained by letting each $\mathbb{P}_p$ be a product measure $\otimes \mu_p$, where $(\mu_p)_{p\in [0,1]}$ is a family of measures on $\mathbb{N}=\{0,1,2,\ldots\}$ that is strictly stochastically increasing in the following sense: for every $p_1 < p_2$, we have $\mu_{p_2}([n,\infty)) \ge \mu_{p_1}([n,\infty))$ for all $n \in \mathbb{N}$, with strict inequality $\mu_{p_2}([n,\infty)) > \mu_{p_1}([n,\infty))$ for $n = 1,2,\dots,t$. 

It is not hard to see that the following two one-parameter families satisfy the above (strict) stochastic domination condition: 
\begin{enumerate}
    \item Poisson$(\rho)$,
    \item $t\cdot\text{Ber}(p)$, i.e.\ $\mu_p(X=t)=p$ and $\mu_p(X=0)=1-p$.
\end{enumerate} 
\end{example}

Before moving to the definition of the dynamics, let us introduce some useful notation. Given a configuration $\eta\in [0,\infty)^{\mathbb{Z}^d}$ and a set of vertices $F\subset \mathbb{Z}^d$, we define $\Pi_F \eta$ to be the set of configurations $\xi\in [0,\infty)^{\mathbb{Z}^d}$ such that
$\xi_y=\eta_y$ for every $y \in \mathbb{Z}^d\setminus F$ and $\xi_y\geq t$ otherwise, where $t$ is the constant of \textbf{R3}.
We extend this notation to events $A$ by $\Pi_F A:=\{\Pi_F \eta \, ; \, \eta\in A\}$. If $F=\{x\}$, we simply write $\Pi_x \eta$ and $\Pi_x A$. 

\textbf{Definition of dynamics.} We now proceed to the definition of the dynamics. Let $T:[0,\infty)^{\mathbb{Z}^d}\rightarrow \{0,1\}^{\mathbb{Z}^d}$ be a (possibly random) map. Given a configuration $\xi\in [0,\infty)^{\mathbb{Z}^d}$, let us write $\mathcal{C}_x(\xi)$ for the connected component of $x$ consisting of vertices $y$ such that $T(\xi)_y=1$. Given $x\in\mathbb{Z}^d$ and $r\in \mathbb{R}$, we define $(\xi^{x,r})_{y\in \mathbb{Z}^d}$ as the configuration that is equal to $\max\{r,\xi_x\}$ for $y=x$, and otherwise is equal to $\xi_y$.

Below $t$ denotes the constant introduced in the definition of the underlying randomness. We assume that $T$ satisfies the following properties:
\begin{enumerate}
    \item[$\textbf{D1}$] \textbf{(Translation invariance)} for every $x\in\mathbb{Z}^d$ and $\xi\in[0,\infty)^{\mathbb{Z}^d}$, $T(\tau_x \xi)=\tau_x T(\xi)$,
    \item[$\textbf{D2}$] \textbf{(Monotonicity)} for every $\xi,\xi'\in [0,\infty)^{\mathbb{Z}^d}$ such that $\xi_x\leq \xi'_x$ for every $x\in \mathbb{Z}^d$, we have $T(\xi)_x\leq T(\xi')_x$ for every $x\in \mathbb{Z}^d$,
    \item[$\textbf{D3}$] \textbf{(Occupation threshold)} for every $x\in\mathbb{Z}^d$, if $\xi_x\geq t$, then $T(\xi)_x=1$,
    \item[$\textbf{D4}$] \textbf{(Connectivity)} for every $\xi\in [0,\infty)^{\mathbb{Z}^d}$, every $x\in \mathbb{Z}^d$ and every $r\geq 0$, we have 
    \[\{y\in \mathbb{Z}^d\setminus\mathcal{C}_x(\xi^{x,r}): |\mathcal{C}_y(\xi^{x,r})|=\infty\}\subset \{y\in \mathbb{Z}^d: |\mathcal{C}_y(\xi)|=\infty\}\]
\end{enumerate}

In words, \textbf{D4} says that increasing $\xi$ at $x$ may potentially change the structure of the cluster $\mathcal{C}_x$, but it will not affect any other \emph{infinite} cluster. Let us remark that \textbf{D4} is implied by the following simpler property:
\begin{enumerate}
    \item[\textbf{AD4}] for every $\xi\in [0,\infty)^{\mathbb{Z}^d}$, every $x\in \mathbb{Z}^d$ and every $r\geq 0$, there exists a connected set of vertices $S$ that contains $x$ such that $\{y\in \mathbb{Z}^d : T(\xi^{x,r})=1\}=\{y\in \mathbb{Z}^d : T(\xi)=1\}\cup S$.
\end{enumerate}

In words, \textbf{AD4} says that increasing $\xi$ at $x$ adds a connected set of open vertices that contains $x$.

\begin{example}
Let us explain why properties $\textbf{D1}-\textbf{D4}$ are satisfied by the toppled vertices of the Abelian sandpile and the activated random walk. In both of these models, each vertex contains a certain number of particles that move around the graph by jumping to a neighbouring vertex according to some rule. If a particle is located at some vertex $x$ and eventually it jumps to another vertex, then we say that $x$ topples. Due to the fact that particles jump between neighbouring vertices, when one adds more particles at some vertex, their trajectory is a connected set. Hence \textbf{AD4} is satisfied, and so does \textbf{D4}. A common property of both of these models is that once a vertex $x$ contains a sufficient number of particles, then it topples, which corresponds to \textbf{D3}. Moreover, both models satisfy the Abelian property, which roughly speaking states that if the configuration of particles stabilizes, i.e.\ each vertex topples finitely many times, then the final configuration does not depend on the order in which particles move. The Abelian property implies that \textbf{D2} is satisfied, as one can first stabilize the $\xi$ particles and only then stabilize the remaining $\xi'-\xi$ particles. Let us remark that in the case of the activated random walk, the function $T$ is itself random.
\end{example}

\begin{example}
Our framework includes maps $T:\{0,1\}^{\mathbb{Z}^d}\rightarrow \{0,1\}^{\mathbb{Z}^d}$ that satisfy \textbf{D1}-\textbf{D4} with $t=1$, since their definition can be trivially extended to the whole of $[0,\infty)^{\mathbb{Z}^d}$. Note that this framework includes monotone cellular automata such as bootstrap percolation. Indeed, in bootstrap percolation each vertex $x$ is occupied when $\xi_x=1$ and vacant otherwise. Occupied vertices remain occupied forever, which corresponds to \textbf{D3}, and each vacant vertex becomes occupied if sufficiently many of its neighbours are occupied, which implies \textbf{AD4}, and hence \textbf{D4}. 
\end{example}

When $\xi\sim \mathbb{P}_p$ we will write $\omega^p$ to denote $T(\xi)$ and $\mathcal{C}^p_x$ to denote $\mathcal{C}_x(\xi)$. As usual in percolation theory, we are interested in the existence of infinite clusters. Let us write $\mathcal{N}$ for the number of $\omega^p$ infinite clusters. We define $p_c=\inf\{p\in [0,1]: \mathbb{P}_p(\mathcal{N}\geq 1)>0\}$. By properties \textbf{R2} and \textbf{D2} we have that $\mathbb{P}_p(\mathcal{N}\geq 1)>0$ for every $p>p_c$.


\section{Proof of main result}

In this section, we will prove Theorem~\ref{thm:main result}. Our proof will utilise the following important fact that we now introduce.



We say that a function $f:\mathbb{Z}^d\times \mathbb{Z}^d\rightarrow [0,\infty]$ is \emph{diagonally invariant} if for every $z\in \mathbb{Z}^d$ we have $f(x-z,y-z)=f(x,y)$. The \emph{mass-transport principle} states that
\[
\sum_{y\in \mathbb{Z}^d} f(x,y)=\sum_{y\in \mathbb{Z}^d} f(-y+2x,x)=\sum_{y\in \mathbb{Z}^d} f(y,x).
\]
The mass-transport principle has been used in several works in percolation theory, especially on graphs beyond $\mathbb{Z}^d$, to study the structure of infinite clusters---see e.g.\ \cite{BLPS99, HP99, LS99}.

We are now ready to prove Theorem~\ref{thm:main result}.

\begin{proof}[Proof of Theorem~\ref{thm:main result}] We split the proof into 3 steps.

\textbf{Step 1:} Let $p>p_c$ and consider $p_c<p_1<p_2<p_3:=p$. We will work with the coupling $(\mathbb{P}_{p_1,p_2,p_3},X,Y,Z)$. Let us write $\omega^{p_1}$, $\omega^{p_2}$, $\omega^{p_3}$ for the site-percolation configurations generated by $X,Y$ and $Z$, respectively. We first define a percolation configuration that interpolates between $\omega^{p_1}$ and $\omega^{p_2}$ and has a unique infinite cluster. Define the configuration $\omega^{p_1,p_2}$ by $\omega^{p_1,p_2}(x)=1$ if $\omega^{p_1}(x)=1$ or if $Y_x\geq t$. Since there exists an infinite cluster $\omega^{p_1}$-almost surely, there exists an infinite cluster $\omega^{p_1,p_2}$-almost surely. Note also that $\omega^{p_1,p_2}\leq \omega^{p_2}\leq \omega^{p_3}$ by properties \textbf{D2} and \textbf{D3}.

Note that the law of $\omega^{p_1,p_2}$ is ergodic by \textbf{R4}, hence the number of infinite clusters of $\omega^{p_1,p_2}$ is almost-surely constant. Furthermore, the law of $\omega^{p_1,p_2}$ is insertion-tolerant by properties \textbf{R3} and \textbf{D3}, in the sense that conditionally on $(\omega^{p_1,p_2})_{y\in \mathbb{Z}^d\setminus\{x\}}$, the probability that $\omega^{p_1,p_2}(x)=1$ is almost surely strictly positive. It is not hard to see that $\omega^{p_1,p_2}$ has either a unique or infinitely many infinite clusters. Indeed, if it has $k\in (1,\infty)$ infinite clusters (almost surely), then there exists $n\geq 1$ such that with positive probability, all $k$ infinite clusters intersect $B_n(o)$. Let us denote this event $\mathcal{A}_n$.

By \textbf{R3}, we have 
$$
   \mathbb{P}_{p_1,p_2}(\Pi_{B_n(o)}\mathcal{A}_n)>0.
$$ 
Since $\mathcal{A}_n$ is an increasing function of the initial configuration $\xi$ by \textbf{D2}, we have $\Pi_{B_n(o)}\mathcal{A}_n=\{\forall x\in B_n(o), \xi_x\geq t\}\cap \mathcal{A}_n$. 
When the event $\{\forall x\in B_n(o), \xi_x\geq t\}\cap \mathcal{A}_n$ happens, all the infinite clusters merge into one infinite cluster. This contradicts the fact that the number of infinite clusters is constant. Thus, $\omega^{p_1,p_2}$ has either a unique or infinitely many infinite clusters.

The Burton-Keane argument \cite{BK89} also applies to insertion-tolerant measures; we now briefly describe it. If $\omega^{p_1,p_2}$ has infinitely many infinite clusters, then arguing as above, we can find $n\geq 1$ such that with positive probability, $\omega^{p_1,p_2}(x)=1$ for every $x\in B_n(o)$, $o$ is connected to infinity, and removing $B_n(o)$ from $\omega^{p_1,p_2}$, we get at least $3$ disjoint infinite clusters which are adjacent to $B_n(o)$. When this happens, we say that $o$ is a \emph{coarse-trifurcation}. We similarly extend the definition to any $y\in \mathbb{Z}^d$. Then we modify our lattice $\mathbb{Z}^d$ as follows. For each $y \in 4n\mathbb{Z}^d$, we identify $B_n(y)$ with a single vertex $u_y$ that we connect with an edge to each vertex adjacent to $B_n(y)$ in $\mathbb{Z}^d$. We denote this graph $G_n$. Let $L\geq 4n$, we let $\mathcal{T}_L$ be the number of vertices $y\in 4n\mathbb{Z}^d$ such that $B_{2n}(y)\subset B_L(o)$ and $y$ is a coarse-trifurcation. Note that when $y$ is a coarse-trifurcation, then $u_y$ is a \emph{trifurcation} in $G_n$, in the sense that $u_y$ is connected to infinity, and removing $u_y$, splits the cluster of $u_y$ into at least $3$ infinite clusters.
Then one can apply the standard Burton-Keane argument to the graph $G_n$ to deduce that almost surely $\mathcal{T}_L\leq CL^{d-1}$ for some $C>0$. Since $\mathbb{E}_{p_1,p_2}(\mathcal{T}_L)\geq cL^d\mathbb{P}_{p_1,p_2}(o \text
{ is a coarse-trifurcation})$ for some $c=c(n,d)>0$, sending $L$ to infinity we deduce that $\mathbb{P}_{p_1,p_2}(o \text
{ is a coarse-trifurcation})=0$. This contradiction implies that $\omega^{p_1,p_2}$ has a unique infinite cluster. 

\textbf{Step 2:} Let us write $\mathcal{C}_{\infty}^{p_1,p_2}$ for the unique $\omega^{p_1,p_2}$ infinite cluster. We will show that almost surely, for every $\omega^{p_3}$ infinite cluster $\mathcal{C}$ that does not contain $\mathcal{C}_{\infty}^{p_1,p_2}$,
the (random) distance between $\mathcal{C}$ and $\mathcal{C}_{\infty}^{p_1,p_2}$ is attained infinitely many times.
Indeed, let us assume that with positive probability this does not happen. Then we can find a vertex $u$ such that with positive probability, the cluster of $u$ under $\omega^{p_3}$ is infinite and does not contain $\mathcal{C}_{\infty}^{p_1,p_2}$. Furthermore, there exists a vertex $v$ such that with positive probability, the cluster $\mathcal{C}^{p_3}_u$ of $u$ is infinite and does not contain $\mathcal{C}_{\infty}^{p_1,p_2}$, $v\in \mathcal{C}^{p_3}_u$, and $v$ attains the distance between $\mathcal{C}^{p_3}_u$ and $\mathcal{C}_{\infty}^{p_1,p_2}$. In particular, with positive probability, $\mathcal{C}^{p_3}_v$ is infinite and does not contain $\mathcal{C}_{\infty}^{p_1,p_2}$, and $v$ attains the distance between $\mathcal{C}^{p_3}_v$ and $\mathcal{C}_{\infty}^{p_1,p_2}$.

Now we obtain a contradiction as follows. For a vertex $x$ and configurations $\omega^{p_1,p_2}$ and $\omega^{p_3}$, let $N(x,\omega^{p_1,p_2},\omega^{p_3})$ be the number of times the distance between $\mathcal{C}^{p_3}_x$ and $\mathcal{C}_{\infty}^{p_1,p_2}$ is attained, with the convention that $N(x,\omega^{p_1,p_2},\omega^{p_3})=\infty$ if $\mathcal{C}_{\infty}^{p_1,p_2}\subset \mathcal{C}^{p_3}_x$. Define $F(x,y,\omega^{p_1,p_2},\omega^{p_3})$ to be $0$ if $N(x,\omega^{p_1,p_2},\omega^{p_3})=\infty$ or if the distance between $\mathcal{C}^{p_3}_x$ and $\mathcal{C}_{\infty}^{p_1,p_2}$ is not attained at $y$, and to be $1/N(x,\omega^{p_1,p_2},\omega^{p_3})$ if $y\in \mathcal{C}^{p_3}_x$ and the distance between $\mathcal{C}^{p_3}_x$ and $\mathcal{C}_{\infty}^{p_1,p_2}$ is attained at $y$. Then $F$ is \emph{diagonally invariant} in the sense that for every $z\in \mathbb{Z}^d$, $F(x-z,y-z,\tau_z \omega^{p_1,p_2},\tau_z \omega^{p_3})=F(x,y,\omega^{p_1,p_2},\omega^{p_3})$. Since the law of $(\omega^{p_1,p_2},\omega^{p_3})$ is translation invariant, $f(x,y):=\mathbb{E}_{p_1,p_2,p_3}(F(x,y,\omega^{p_1,p_2},\omega^{p_3}))$ is also diagonally invariant, and we can apply the mass-transport principle. On the one hand, 
$$
\sum_{y\in \mathbb{Z}^d} F(x,y,\omega^{p_1,p_2},\omega^{p_3})\leq 1,
$$
hence 
$$
\sum_{y\in \mathbb{Z}^d} f(x,y)\leq 1.
$$
On the other hand, on the event $$\{|\mathcal{C}^{p_3}_y|=\infty, \mathcal{C}^{p_3}_y \, \cap \, \mathcal{C}_{\infty}^{p_1,p_2}=\emptyset, y \text{ attains } d(\mathcal{C}^{p_3}_y, \mathcal{C}_{\infty}^{p_1,p_2}), N(y,\omega^{p_1,p_2},\omega^{p_3})<\infty\}$$
we have
$$
\sum_{x\in \mathbb{Z}^d} F(x,y,\omega^{p_1,p_2},\omega^{p_3})=\infty,
$$
where $d(\cdot,\cdot)$ denotes the graph distance.
Since this event happens with positive probability,
$$
\sum_{x\in \mathbb{Z}^d} f(x,y)=\infty.
$$
This contradiction implies that the distance between any infinite cluster $\mathcal{C}$ and $\mathcal{C}_{\infty}^{p_1,p_2}$ is attained infinitely many times, as desired.

\textbf{Step 3:} We will show that almost surely, every $\omega^{p_3}$ infinite cluster contains $\mathcal{C}_{\infty}^{p_1,p_2}$, which implies that there exists a unique $\omega^{p_3}$ infinite cluster. We argue by contradiction. Let us assume that with positive probability $\mathcal{C}^{p_3}_o$ is infinite and does not contain $\mathcal{C}_{\infty}^{p_1,p_2}$. Combined with Step 2, this implies that there exist constants $D>0$ and $c>0$ such that with probability at least $c$, $\mathcal{C}^{p_3}_o$ is infinite, its distance from $\mathcal{C}_{\infty}^{p_1,p_2}$ is $D$, and this distance is attained infinitely many times. Let us write $\mathcal{E}$ to denote the latter event.

Let $N$ and $K$ be large enough constants to be defined.
Recall the definition of the map $T$. Given configurations $\xi_{p_1}\leq \xi_{p_2}\leq \xi_{p_3}$, by abuse of notation we define $T(\xi_{p_1},\xi_{p_2})_x:=\max\{T(\xi_{p_1})_x, \mathbbm{1}_{\xi_{p_2}(x)\geq t}\}$ , and we write $(\xi_{p_1},\xi_{p_2},\xi_{p_3})\in \mathcal{A}$ for some event $\mathcal{A}$ on $\{0,1\}^{\mathbb{Z}^d}\times \{0,1\}^{\mathbb{Z}^d}$ to denote that $(T(\xi_{p_1},\xi_{p_2}), T(\xi_{p_3}))\in \mathcal{A}$. Now given $(\xi_{p_1}, \xi_{p_2},\xi_{p_3})\in \mathcal{E}$, write $\mathcal{S}$ for the restriction of $\mathcal{C}_{\infty}^{p_1,p_2}$ to $B_N(o)$, and $\overline{\mathcal{S}}^D$ for the set of vertices $x\in B_N(o)$ such that $d(x,\mathcal{S})\leq D-1$. We define $\tilde{\xi}_{p_3}$ to be the configuration which is equal to $0$ for $x\in \overline{\mathcal{S}}^D$, and is equal to the initial configuration $\xi_{p_3}$ otherwise.
Notice that by \textbf{D4}, $\mathcal{C}^{p_3}_o(\tilde{\xi}_{p_3})$ and $\mathcal{C}^{p_3}_o(\xi_{p_3})$ coincide on the event $\mathcal{E}$. 

For each $u,v$ we fix a geodesic $\gamma_{u,v}$ from $u$ to $v$. Consider the event $\mathcal{E}_{N,K}$ that $\mathcal{C}^{p_3}_o(\tilde{\xi}_{p_3})$ is infinite, its distance from $\mathcal{C}_{\infty}^{p_1,p_2}$ is equal to $D$, and furthermore, this distance is attained at least $K$ times in $B_N(o)$. Here, the latter means that there are at least $K$ pairs $(x,y)$ with $x \in \mathcal{S}$, $y \in \mathcal{C}^{p_3}o \cap B_N(o)$, $d(x,y) = D$, and such that the entire geodesic $\gamma{x,y}$ lies within $B_N(o)$. Notice that when the event $\mathcal{E}$ happens, and the distance between $\mathcal{C}_{\infty}^{p_1,p_2}$ and $\mathcal{C}^{p_3}_o$ is attained at least $K$ times in $B_N(o)$, then $\mathcal{E}_{N,K}$ happens. Thus, for every $K\geq 1$, for every $N\geq 1$ large enough, we have 
\begin{equation}\label{eq: contradiction}
\mathbb{P}(\mathcal{E}_{N,K}\cap \mathcal{E})\geq c/2,
\end{equation} 
where for simplicity we write $\mathbb{P}=\mathbb{P}_{p_1,p_2,p_3}$.
We will derive a contradiction by showing that $\mathbb{P}(\mathcal{E}_{N,K}\cap \mathcal{E})< c/2$ for every $K$ large enough.

To this end, we will use the multi-valued map principle. First, we partition the event $\mathcal{A}:=\{\xi_{p_1}\leq \xi_{p_2}\leq \xi_{p_3}\in [0,\infty)^{\mathbb{Z}^d}: (\xi_{p_1},\xi_{p_2},\xi_{p_3})\in \mathcal{E}_{N,K}\cap\{\mathcal{C}_{\infty}^{p_1,p_2}\cap \mathcal{C}^{p_3}_o=\emptyset\}\}$ into events of the form $\mathcal{A}(\psi):=\{(\xi_{p_1}, \xi_{p_2},\xi_{p_3})\in \mathcal{A}: (\mathcal{C}_{\infty}^{p_1,p_2}\cap B_N(o),\mathcal{C}^{p_3}_o\cap B_N(o))=\psi\}$. Then we define a map $F$ that maps each event $\mathcal{A}(\psi)$ to a collection of at least $K$ events $F(\mathcal{A}(\psi),x,y)$, where $x\in \mathcal{S}$ and $y\in \mathcal{C}^{p_3}_o \cap B_N(o)$ are such that $d(x,y)=D$ and $\gamma_{x,y}$ lies in $B_N(o)$.
For $(\xi_{p_1},\xi_{p_2},\xi_{p_3})\in \mathcal{A}(\psi)$ and $x,y$ as above, we define $F(\xi_{p_1},\xi_{p_2},\xi_{p_3},x,y)$ to be the set of configurations $(\xi_{p_1},\xi_{p_2},\xi'_{p_3})$, where $\xi'_{p_3}$ is obtained from $(\xi_{p_1},\xi_{p_2},\xi_{p_3})$
by letting
$\xi'_{p_3}(v)\geq t$ for every $v\in \gamma_{x,y}\setminus \{x,y\}$ such that $\xi_{p_3}(v)<t$, while otherwise letting $\xi'_{p_3}(v)$ be equal to $\xi_{p_3}(v)$. Then $F(\mathcal{A}(\psi),x,y)$ is defined as the union of $F(\xi_{p_1},\xi_{p_2},\xi_{p_3},x,y)$ over all $(\xi_{p_1},\xi_{p_2},\xi_{p_3})\in \mathcal{A}(\psi)$. By \textbf{R3}, we have that
$$
\mathbb{P}(F(\mathcal{A}(\psi),x,y))\geq \varepsilon^D \mathbb{P}(\mathcal{A}(\psi)).
$$

Moreover, we claim that for each $(\psi,x,y)$ there are at most $|B_{2D}(o)|^2$ triples $(\psi',z,w)$ such that $F(\mathcal{A}(\psi'),z,w)=F(\mathcal{A}(\psi),x,y)$. Indeed, $\xi_{p_1}$ and $\xi_{p_2}$ remain the same after applying $F$, which implies that from the event $F(\mathcal{A}(\psi),x,y)$, we can uniquely determine the sets $\mathcal{C}_{\infty}^{p_1,p_2}\cap B_N(o)$, $\mathcal{S}$ and $\overline{\mathcal{S}}^D$. Furthermore, since $\xi_{p_3}$ is only modified in $\overline{\mathcal{S}}^D\setminus\mathcal{S}$, we can uniquely determine $\tilde{\xi}_{p_3}$, and hence $\mathcal{C}_{o}^{p_3}\cap B_N(o)$. This implies that $\psi'=\psi$. It remains to determine the number of possibilities for $z,w$. Consider a pre-image $(\psi,z_0,w_0)$ of $F(\mathcal{A}(\psi),x,y)$, which means that on the event $F(\mathcal{A}(\psi),x,y)$ we have that $\xi'_{p_3}(v)\geq t$ for all $v\in \gamma_{z_0,w_0}\setminus \{z_0,w_0\}$. Note that for any $z,w$ such that $F(\mathcal{A}(\psi),z,w)=F(\mathcal{A}(\psi),x,y)$, $\gamma_{z,w}$ must intersect $\gamma_{z_0,w_0}$, since otherwise each vertex of $\gamma_{z_0,w_0}$ would be open in the original $\omega^{p_3}$, thus connecting $\mathcal{C}_{\infty}^{p_1,p_2}$ and $\mathcal{C}^{p_3}_o$. Hence, $z$ and $w$ must lie in $B_{2D}(z_0)$. Combining the above, we deduce that there are at most $|B_{2D}(o)|^2$ pre-images.

Let us write $\mathcal{B}$ to denote the event $\bigcup_{(\psi,x,y)} F(\mathcal{A}(\psi),x,y)$. For each $\psi$, let $\Sigma(\psi)$ denote a set of representative pairs of points $(z,w)$, i.e.\ for every distinct $(z,w), (z',w')\in \Sigma(\psi)$ we have $F(\mathcal{A}(\psi),z,w)\neq F(\mathcal{A}(\psi),z',w')$ and for each $(x,y)$ there exists $(z,w)\in \Sigma(\psi)$ such that $F(\mathcal{A}(\psi),z,w)= F(\mathcal{A}(\psi),x,y)$. We can now deduce that 
\begin{align*}
1&\geq \mathbb{P}(\mathcal{B})=\sum_{\psi}\sum_{(z,w)\in \Sigma(\psi)} \mathbb{P}(F(\mathcal{A}(\psi),z,w))\geq \frac{\varepsilon^D}{|B_{2D}(o)|^2} \sum_{(\psi,x,y)} \mathbb{P}(\mathcal{A}(\psi))\\&\geq  \frac{\varepsilon^D K}{|B_{2D}(o)|^2}\sum_{\psi} \mathbb{P}(\mathcal{A}(\psi))=\frac{\varepsilon^D K}{|B_{2D}(o)|^2}\mathbb{P}(\mathcal{A}).
\end{align*}
We now choose $K$ to be large enough so that
$$
\mathbb{P}(\mathcal{A})\leq \frac{|B_{2D}(o)|^2}{\varepsilon^D K}<c/2.
$$
Since $\mathcal{E}_{N,K}\cap \mathcal{E}\subset\mathcal{A}$, this contradicts \eqref{eq: contradiction}. This implies that almost surely, every $\omega^{p_3}$ infinite cluster contains $\mathcal{C}_{\infty}^{p_1,p_2}$, which in turn implies the desired uniqueness of infinite cluster.
\end{proof}

\begin{remark}
Beyond $\mathbb{Z}^d$, Theorem~\ref{thm:main result} applies to every vertex-transitive amenable graph. Indeed, our proof utilises the Burton-Keane argument, which applies to any vertex-transitive amenable graph, and the mass-transport principle. Graphs that satisfy the latter property are sometimes called \emph{unimodular}. It is known that amenability implies unimodularity \cite[Proposition 8.14]{LyonsPeres}.
\end{remark}

\begin{remark}
If we assume that each $\mathbb{P}_p$ is a product measure, then one can avoid the use of the multi-valued map principle in the proof of Theorem~\ref{thm:main result} and instead use independence to open the geodesics $\gamma_{x,y}$. 
\end{remark}

\bibliographystyle{abbrv}
\bibliography{references}

\end{document}